\theoremstyle{plain}
\newtheorem{thm}{Theorem}[section]
\newtheorem{prop}[thm]{Proposition}
\theoremstyle{definition}
\theoremstyle{remark}
\newcommand{\Prob}{\mathrm{P}}
\newcommand{\Erw}{\mathrm{E}}
\title{{A Note on Some Martingale Inequalities}}
\author{Jan Pleis\thanks{The author was supported by 
		the Graduate School for Computing in Medicine and Life Sciences funded by
		Germany's Excellence Initiative [DFG GSC 235/2]. e-mail: pleis@math.uni-luebeck.de}
\ \ and Andreas R\"o{\ss}ler\thanks{e-mail: roessler@math.uni-luebeck.de}
\bigskip
\\
\small{
	Institute of Mathematics, Universit\"at zu L\"ubeck,} \\
\small{Ratzeburger Allee 160, 23562 L\"ubeck, Germany} 
}
\date{}
\begin{document}

\maketitle

\begin{abstract}
\noindent
We derive inequalities for time-discrete and time-continuous martingales 
that are similar to the well-known Burkholder inequalities. For the time-discrete 
case arbitrary martingales in $L^p(\Omega)$ are treated, whereas in the
time-continuous case martingales defined by It{\^o} integrals w.r.t.\ 
a multi-dimensional Wiener process are considered. The estimates for the
time-discrete martingales are related to the more general results by I.~Pinelis (1994) and are proved to be sharp by a different and more 
elementary proof for this special setting.
Further, for time-continuous martingales the presented inequalities are
generalizations of similar estimates 
proved by M.~Zakai (1967) and E.~Rio (2009) to the general multi-dimensional case.
Especially, these inequalities possess smaller constants compared to the ones 
that result if the original Burkholder inequalities would be applied for such 
estimates. Therefore, the presented inequalities are highly 
valuable in, e.g., stochastic analysis and stochastic numerics. 
\end{abstract}
%
%
%
%
\section{Burkholder inequalities for martingales}
In stochastic analysis the Burkholder inequalities give powerful estimates 
for martingales, and they are used frequently. 
In the following, let $(\Omega, \mathcal{F}, \Prob)$ be a complete probability space.
Let $p \in {[2,\infty[}$, $d, N \in \mathbb{N}$, 
and let $(\mathcal{G}_n)_{n \in \{0,1, \ldots,N\}}$
be a filtration. 
Consider a time-discrete martingale $(M_n)_{n\in\{0,1,\dots,N\}}$ 
in $L^{p}(\Omega;\mathbb{R}^d)$ w.r.t.\ the filtration
$(\mathcal{G}_{n})_{n\in\{0,1,\dots,N\}}$.
Define $d_0 = M_0$ and $d_k=M_k-M_{k-1}$ for $k \in \{1,\ldots,N\}$ such
that $M_n = \sum_{k=0}^{n} d_k$. According to \cite{Bur1988a}, the Burkholder
inequalities state
\begin{align} \label{eq:dBurkholder1}
	\| M_n \|_{L^p(\Omega; \mathbb{R}^d)}^2 \le (p-1)^2 
	\bigg\| \sum_{k=0}^{n} \| d_k \|^2 \bigg\|_{L^{\frac{p}{2}}(\Omega; \mathbb{R})}
\end{align}
and
\begin{align} \label{eq:dBurkholder2}
	\bigg\| \sup_{\nu \in\{0,1,\dots,n\}} \| M_{\nu} \| 
	\bigg\|_{L^p(\Omega; \mathbb{R})}^2
	\le p^2 \bigg\| \sum_{k=0}^{n} \| d_k \|^2 \bigg\|_{L^{\frac{p}{2}}(\Omega;
	\mathbb{R})}
\end{align}
for all $n \in \{0,1,\dots,N\}$ where the constants are best possible. 

%
These inequalities carry over to time-continuous martingales defined by 
It{\^o} stochastic integrals~\cite{Bur1988a}. 
Let $(\mathcal{F}_t)_{t \in [0,T]}$ for some $T>0$ be a filtration that fulfills 
the usual conditions. 
For $m\in\mathbb{N}$, let $W \colon [0,T] \times \Omega \to \mathbb{R}^m$ 
be an $m$-dimensional Wiener process w.r.t.\ the filtration 
$(\mathcal{F}_t)_{t \in [0,T]}$.
For $j \in \{1,\dots,m\}$, let $f^j \colon [0,T] \times \Omega \to \mathbb{R}^d$ 
be an $(\mathcal{F}_t)_{t\in[0,T]}$-adapted and measurable stochastic process with
$\Erw \big[ \big( \int_0^T  \| f^j_u \|^2 \, \mathrm{d}u \big)^{\frac{p}{2}} \big] < \infty$.
Then, it holds
\begin{align} \label{eq:cBurkholder1}
	\bigg\| \sum_{j=1}^m \int_{0}^t f^{j}_u \, \mathrm{d} W^{j}_u \bigg\|_{L^{p}(\Omega;
	\mathbb{R}^d)}^2 &\le (p-1)^2 \bigg\| \int_{0}^t \sum_{j=1}^m \big\| f^{j}_u 
	\big\|^2 \, \mathrm{d} u \bigg\|_{L^{\frac{p}{2}}(\Omega; \mathbb{R})}
\end{align}
and
\begin{align} \label{eq:cBurkholder2}
	\bigg\| \sup_{s\in [0,t]} \bigg\| \sum_{j=1}^m \int_{0}^s f^{j}_u \, \mathrm{d} 
	W^{j}_u \bigg\| \bigg\|_{L^{p}(\Omega; \mathbb{R})}^2 
	&\le p^2 \bigg\| \int_{0}^t \sum_{j=1}^m \big\| f^{j}_u \big\|^2 \, \mathrm{d} u
	\bigg\|_{L^{\frac{p}{2}}(\Omega; \mathbb{R})}
\end{align}
for all $t\in[0,T]$, where the constants are best possible, see \cite{Bur1988a}. 

%
Considering for example the convergence analysis of numerical methods for
stochastic 
(partial) differential equations, the expressions on the right-hand 
sides of inequalities~\eqref{eq:dBurkholder1}-\eqref{eq:cBurkholder2} are 
usually not needed explicitly. 
This is because the triangle inequality is often applied to the 
$L^{\frac{p}{2}}(\Omega; \mathbb{R})$-norms in order to obtain suitable 
upper bounds.
In the following, we show that for such inequalities the constants can be 
reduced in case of $p>2$ compared to the constants given in 
\eqref{eq:dBurkholder1}-\eqref{eq:cBurkholder2}.
\section{Generalized Burkholder-type inequalities}
In this section, we generalize some Burkholder-type inequalities proved 
by E.~Rio~\cite{Rio2009a} and M.~Zakai~\cite{Zak1967a}, respectively, 
to the multi-dimensional case. The resulting constants for these inequalities are 
smaller than the ones that one would get from
inequalities~\eqref{eq:dBurkholder1}-\eqref{eq:cBurkholder2}. 
The proofs for these inequalities are postponed to Section~\ref{sec:proofs}. 
Firstly, we consider the time-discrete case. 
\begin{prop} \label{thm:discreteBurkholder}
	Let $p\in {[2,\infty[}$ and $N \in \mathbb{N}$. Further, let 
	$(M_n)_{n\in\{0,1,\dots,N\}}$ with 
	$M_n = \sum_{k=0}^{n} d_k$ be a time-discrete 
	martingale in $L^{p}(\Omega; \mathbb{R}^d)$ 
	w.r.t.\ the filtration $(\mathcal{G}_{n})_{n \in \{0,1,\dots,N\}}$. 
	Then, it holds 
	\begin{align} \label{eq:discreteBurkholder}
		\| M_n \|_{L^{p}(\Omega; \mathbb{R}^d)}^2 \le (p-1) \sum_{k=0}^{n} 
		\| d_k \|_{L^{p}(\Omega; \mathbb{R}^d)}^2
	\end{align}
	and
	\begin{align*}
		\bigg\| \sup_{\nu \in\{0,1,\dots,n\}} \| M_{\nu} \|
		\bigg\|_{L^{p}(\Omega; \mathbb{R})}^2 \le \frac{p^2}{p-1} \sum_{k=0}^{n} 
		\| d_k \|_{L^{p}(\Omega; \mathbb{R}^d)}^2
	\end{align*}
	for all $n \in \{0,1,\dots,N\}$. The constants are best possible.
\end{prop}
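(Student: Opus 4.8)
The plan is to deduce \eqref{eq:discreteBurkholder} by induction on $n$ from a single one-step estimate, rather than from the classical Burkholder inequality \eqref{eq:dBurkholder1}. Indeed, applying the triangle inequality in $L^{p/2}(\Omega;\mathbb{R})$ to the right-hand side of \eqref{eq:dBurkholder1} only yields the constant $(p-1)^2$, so a sharper and more direct argument is required. The key step I would isolate is the following: if $X \in L^p(\Omega;\mathbb{R}^d)$ is $\mathcal{G}_{n-1}$-measurable and $Y = d_n$ satisfies $\Erw[Y \mid \mathcal{G}_{n-1}] = 0$, then
\begin{align*}
\| X + Y \|_{L^p(\Omega;\mathbb{R}^d)}^2 \le \| X \|_{L^p(\Omega;\mathbb{R}^d)}^2 + (p-1)\, \| Y \|_{L^p(\Omega;\mathbb{R}^d)}^2 .
\end{align*}
Taking $X = M_{n-1}$, $Y = d_n$ and iterating this bound (with base case $\|M_0\|_{L^p}^2 = \|d_0\|_{L^p}^2 \le (p-1)\|d_0\|_{L^p}^2$, valid since $p \ge 2$) immediately gives \eqref{eq:discreteBurkholder}.

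To prove the one-step estimate --- which I expect to be the main obstacle --- I would set $g(t) = \Erw\big[\|X + tY\|^p\big]$ for $t \in [0,1]$ and exploit that $x \mapsto \|x\|^p$ is twice continuously differentiable on $\mathbb{R}^d$ for $p \ge 2$. Differentiating under the expectation (which must be justified by a dominated-convergence argument using the $L^p$-integrability of $X$ and $Y$), the martingale property $\Erw[Y\mid\mathcal{G}_{n-1}] = 0$ forces $g'(0) = p\,\Erw\big[\|X\|^{p-2}\langle X, \Erw[Y\mid\mathcal{G}_{n-1}]\rangle\big] = 0$. For the second derivative, the Cauchy--Schwarz inequality $\langle X+tY, Y\rangle^2 \le \|X+tY\|^2\|Y\|^2$ yields $g''(t) \le p(p-1)\,\Erw\big[\|X+tY\|^{p-2}\|Y\|^2\big]$, and H\"older's inequality with exponents $\tfrac{p}{p-2}$ and $\tfrac{p}{2}$ gives $g''(t) \le p(p-1)\, g(t)^{(p-2)/p}\,\|Y\|_{L^p}^2$. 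Passing to $h(t) = g(t)^{2/p}$, the factor $\tfrac{2}{p}-1 \le 0$ kills one term and the remaining exponents cancel exactly, leaving the clean bound $h''(t) \le 2(p-1)\|Y\|_{L^p}^2$ together with $h'(0) = 0$. The Taylor identity $h(1) = h(0) + \int_0^1 (1-t)\,h''(t)\,\mathrm{d}t$ with $\int_0^1(1-t)\,\mathrm{d}t = \tfrac12$ then produces exactly the factor $(p-1)$.

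For the maximal inequality I would note that $(\|M_\nu\|)_\nu$ is a nonnegative submartingale, so Doob's $L^p$ maximal inequality gives $\big\|\sup_{\nu\le n}\|M_\nu\|\big\|_{L^p} \le \tfrac{p}{p-1}\,\|M_n\|_{L^p(\Omega;\mathbb{R}^d)}$. Squaring and inserting \eqref{eq:discreteBurkholder} turns the constant into $\tfrac{p^2}{(p-1)^2}\cdot(p-1) = \tfrac{p^2}{p-1}$, as claimed.

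Finally, for sharpness I would use the scalar multiplicative martingale $M_k = \prod_{j=1}^k (1 + \varepsilon \xi_j)$ with i.i.d.\ Rademacher variables $\xi_j$ and $d_0 = M_0 = 1$, so that $d_k = \varepsilon\,\xi_k\,M_{k-1}$ and hence $\|d_k\|_{L^p}^2 = \varepsilon^2\|M_{k-1}\|_{L^p}^2$. Writing $\rho = \big(\tfrac12((1+\varepsilon)^p + (1-\varepsilon)^p)\big)^{2/p} = 1 + (p-1)\varepsilon^2 + O(\varepsilon^4)$, independence gives $\|M_n\|_{L^p}^2 = \rho^n$ and $\sum_{k=0}^n\|d_k\|_{L^p}^2 = 1 + \tfrac{\varepsilon^2}{\rho-1}(\rho^n-1)$, so the ratio $\|M_n\|_{L^p}^2 \big/ \sum_k\|d_k\|_{L^p}^2$ tends to $\tfrac{\rho-1}{\varepsilon^2} = (p-1)+O(\varepsilon^2)$ as $n\to\infty$, and then to $p-1$ as $\varepsilon\to0$. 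It is worth stressing that Gaussian (small-step random-walk) increments are extremal only for $p=2$, which is precisely why the two-valued multiplicative construction is needed. Combining this with the known sharpness of Doob's inequality yields optimality of the constant $\tfrac{p^2}{p-1}$ in the maximal estimate.
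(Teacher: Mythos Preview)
Your proposal is correct and follows essentially the same route as the paper: isolate the one-step inequality $\|X+Y\|_{L^p}^2 \le \|X\|_{L^p}^2 + (p-1)\|Y\|_{L^p}^2$ for a martingale increment, establish it via second-order Taylor expansion of $t\mapsto\|X+tY\|^p$ combined with Cauchy--Schwarz on the inner product and H\"older with exponents $\tfrac{p}{p-2},\tfrac{p}{2}$, then induct and finish with Doob's maximal inequality. The only cosmetic differences are that the paper carries out the Taylor expansion pointwise and then takes expectations (invoking Fubini rather than differentiation under the integral), after which it closes the estimate by citing Zakai's Gronwall-type lemma with $\alpha=2/p$; your passage to $h=g^{2/p}$ and the bound $h''\le 2(p-1)\|Y\|_{L^p}^2$ is exactly the substitution that proves that lemma, so the two arguments are equivalent. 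For sharpness the paper simply cites Rio's remark and the known sharpness of Doob's inequality, whereas you supply the explicit multiplicative Rademacher construction; your asymptotic computation $(\rho-1)/\varepsilon^2\to p-1$ is correct.
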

Here, it has to be pointed out that the result
\eqref{eq:discreteBurkholder} is a special case of \cite[Prop.~2.5]{MR1331198}
by Pinelis if $d=1$, and the proof given in \cite{MR1331198}
can be easily generalized to the case $d \geq 1$ that we consider. Nevertheless, 
our poof for Proposition~\ref{thm:discreteBurkholder} is rather 
elementary compared to the one given for the more general setting in
\cite{MR1331198}. Thus, the presented proof may be of some interest on its own.
In the case of $d=1$, an inequality similar to~\eqref{eq:discreteBurkholder} and its 
sharpness are proved in~\cite{MR1331198}. In Section~\ref{sec:proofs}, 
we extent this proof to general $d \in \mathbb{N}$.
Next, we consider the time-continuous case.
Already in 1967, Zakai proved inequality~\eqref{eq:Zakai} 
of the following proposition in the case of $d=m=1$, see
\cite[Theorem~1]{Zak1967a}. 
We generalize this idea to $\mathbb{R}^d$-valued integrands of It{\^o} 
stochastic integrals w.r.t.\ an $m$-dimensional Wiener process.
\begin{prop} \label{thm:Zakai}
	Let $p\in {[2,\infty[}$, and for $j \in \{1,\dots,m\}$, let 
	$f^j \colon [0,T] \times \Omega \rightarrow \mathbb{R}^d$ be an 
	$(\mathcal{F}_t)_{t\in[0,T]}$-adapted, measurable stochastic process 
	with
	$\Erw \big[ \big( \int_0^T \| f^j_u \|^2 \, \mathrm{d} u \big)^{\frac{p}{2}}
	\big] <	\infty$.
	Then, it holds
	\begin{align} \label{eq:Zakai}
		\bigg\| \sum_{j=1}^m \int_{0}^t f^{j}_u \, \mathrm{d} W^{j}_u 
		\bigg\|_{L^{p}(\Omega; \mathbb{R})}^2
		&\le (p-1) \int_{0}^t \bigg\| \sum_{j=1}^m \big\| f^{j}_u \big\|^2
		\bigg\|_{L^{\frac{p}{2}}(\Omega; \mathbb{R})} \, \mathrm{d} u
	\end{align}
	and
	\begin{align*}
		\bigg\| \sup_{s\in [0,t]} \bigg\| \sum_{j=1}^m \int_{0}^s f^{j}_u 
		\, \mathrm{d} W^{j}_u \bigg\| \bigg\|_{L^{p}(\Omega; \mathbb{R})}^2
		&\le \frac{p^2}{p-1} \int_{0}^t \bigg\| 
		\sum_{j=1}^m \big\| f^{j}_u \big\|^2 
		\bigg\|_{L^{\frac{p}{2}}(\Omega; \mathbb{R})} \, \mathrm{d} u
	\end{align*}
	for all $t\in[0,T]$.
\end{prop}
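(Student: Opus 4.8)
The plan is to treat the $\mathbb{R}^d$-valued martingale $X_t = \sum_{j=1}^m \int_0^t f^j_u \, \mathrm{d}W^j_u$ by applying It{\^o}'s formula to $\phi(x) = \|x\|^p$, mimicking Zakai's scalar argument. First I would record that for $p \ge 2$ one has $\phi \in C^2(\mathbb{R}^d)$, with $\partial_i\phi(x) = p\|x\|^{p-2}x_i$ and $\partial_i\partial_l\phi(x) = p(p-2)\|x\|^{p-4}x_i x_l + p\|x\|^{p-2}\delta_{il}$, the apparent singularity at the origin being removable because $p-2 \ge 0$. Since the quadratic covariation of the components is $\mathrm{d}\langle X^i, X^l\rangle_u = \sum_{j=1}^m f^{j,i}_u f^{j,l}_u \, \mathrm{d}u$, It{\^o}'s formula together with a standard localization argument (using the integrability hypothesis to make the stochastic integral a true martingale with vanishing expectation, and uniform integrability to pass to the limit) yields
\[
	\Erw\big[\|X_t\|^p\big] = \frac{1}{2}\,\Erw\bigg[\int_0^t \sum_{i,l=1}^d \partial_i\partial_l\phi(X_u)\sum_{j=1}^m f^{j,i}_u f^{j,l}_u \, \mathrm{d}u\bigg].
\]

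The crucial estimate, and the place where the factor $(p-1)$ is generated, comes next. Inserting the Hessian produces the integrand $p(p-2)\|X_u\|^{p-4}\sum_{j=1}^m \langle X_u, f^j_u\rangle^2 + p\|X_u\|^{p-2}\sum_{j=1}^m \|f^j_u\|^2$. Applying Cauchy--Schwarz in the form $\langle X_u, f^j_u\rangle^2 \le \|X_u\|^2\|f^j_u\|^2$ and using $p(p-2)\ge 0$, I would bound this by $p(p-1)\|X_u\|^{p-2}\sum_{j=1}^m\|f^j_u\|^2$. A subsequent application of H{\"o}lder's inequality with exponents $\tfrac{p}{p-2}$ and $\tfrac{p}{2}$ separates the two factors and gives, with $G(t) = \|X_t\|_{L^p(\Omega;\mathbb{R}^d)}^2$ and $\psi(u) = \big\|\sum_{j=1}^m\|f^j_u\|^2\big\|_{L^{p/2}(\Omega;\mathbb{R})}$, the closed integral inequality
\[
	G(t)^{p/2} \le \frac{p(p-1)}{2}\int_0^t G(u)^{(p-2)/2}\,\psi(u)\,\mathrm{d}u .
\]

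The main obstacle is extracting the sharp constant from this nonlinear (Bihari-type) inequality. I would argue by majorization: writing $\Phi(t)$ for the right-hand side, $\Phi$ is absolutely continuous with $\Phi'(t) = \tfrac{p(p-1)}{2}G(t)^{(p-2)/2}\psi(t) \le \tfrac{p(p-1)}{2}\Phi(t)^{(p-2)/p}\psi(t)$, using $G \le \Phi^{2/p}$. Hence $\tfrac{\mathrm{d}}{\mathrm{d}t}\Phi(t)^{2/p} = \tfrac{2}{p}\Phi(t)^{(2-p)/p}\Phi'(t) \le (p-1)\psi(t)$ almost everywhere, and integrating from $0$ (where $\Phi$ vanishes) gives $\Phi(t)^{2/p} \le (p-1)\int_0^t\psi(u)\,\mathrm{d}u$, so that $G(t) \le \Phi(t)^{2/p} \le (p-1)\int_0^t\psi(u)\,\mathrm{d}u$, which is exactly \eqref{eq:Zakai}. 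Care is needed on the set where $\Phi$ vanishes; this is handled by replacing $\Phi$ with $\Phi+\varepsilon$ and letting $\varepsilon \downarrow 0$, while the degenerate case $p=2$ reduces directly to the It{\^o} isometry.

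Finally, for the maximal inequality I would use that $\|X_s\|$ is a nonnegative submartingale (by conditional Jensen), so Doob's $L^p$ maximal inequality gives $\big\|\sup_{s\in[0,t]}\|X_s\|\big\|_{L^p} \le \tfrac{p}{p-1}\|X_t\|_{L^p}$. Squaring this and combining with \eqref{eq:Zakai} produces the bound with constant $\big(\tfrac{p}{p-1}\big)^2(p-1) = \tfrac{p^2}{p-1}$, as claimed.
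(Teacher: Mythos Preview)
Your proof is correct and follows essentially the same route as the paper's: It\^o's formula applied to the $p$-th power of the norm, Cauchy--Schwarz on the Hessian term to produce the factor $p(p-1)$, H\"older's inequality, a Bihari/Gronwall-type bound, and Doob's maximal inequality. The only cosmetic differences are that the paper (following Zakai) works with the regularization $(\delta+\|x\|^2)^{p/2}$ and lets $\delta\downarrow 0$ rather than applying It\^o directly to $\|x\|^p$ with localization, and it cites Zakai's Gronwall-type lemma instead of spelling out the Bihari argument you give.
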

Here, it is an open problem whether the constants in Proposition~\ref{thm:Zakai} 
are best possible. Since we only have one-sided estimates, we cannot 
transfer the results from the time-discrete case in 
Proposition~\ref{thm:discreteBurkholder} to the time-continuous case 
as Burkholder did in \cite{Bur1988a}, cf.\ \cite{Dol1969a}.
However, for $p>2$ the constants in Proposition~\ref{thm:discreteBurkholder} 
and Proposition~\ref{thm:Zakai} are of order $\mathcal{O}(p)$ as 
$p \to \infty$, and they are smaller than the ones in 
inequalities~\eqref{eq:dBurkholder1}-\eqref{eq:cBurkholder2}, 
where the constants are of order $\mathcal{O}(p^2)$ as $p \to \infty$. 
This makes the inequalities in Proposition~\ref{thm:discreteBurkholder} and 
Proposition~\ref{thm:Zakai} highly valuable for, e.g., the convergence analysis of
numerical methods for stochastic (partial) differential equations where 
constants matter.
\section{Proofs} \label{sec:proofs}
\begin{proof}[Proof of Proposition~\ref{thm:discreteBurkholder}]
	In the case of $p=2$, the assertion follows from the discrete Burkholder 
	inequalities~\eqref{eq:dBurkholder1} and~\eqref{eq:dBurkholder2}.
	Therefore, we assume $p\in {]2,\infty[}$ in the following. 
	An inequality similar to the one in~\eqref{eq:discreteBurkholder} and 
	its sharpness was proved by E.~Rio in \cite[Section~2]{Rio2009a} for 
	the case of $d=1$. 
	
	We adapt his proof to the case of general $d\in\mathbb N$.
	For this, we generalize \cite[Proposition~2.1]{Rio2009a}. 
	Let $X, Y \in L^{p}(\Omega; \mathbb{R}^d)$ and let $\mathcal{G} \subseteq
	\mathcal{F}$ be some sub-$\sigma$-algebra such that $X$ is
	$\mathcal{G}$-measurable and 
	$\Erw [Y \vert \mathcal{G}]=0$ $\Prob$-a.s. Then, we first prove that
	\begin{align} \label{eq:Rio}
		\| X+Y \|_{L^{p}(\Omega; \mathbb{R}^d)}^2 
		\le \| X \|_{L^{p}(\Omega; \mathbb{R}^d)}^2
		+(p-1) \| Y \|_{L^{p}(\Omega; \mathbb{R}^d)}^2.
	\end{align}
	If $X=0$ or $Y=0$, this inequality is clearly true, so we assume 
	$\| X \|_{L^{p}(\Omega; \mathbb{R}^d)} > 0$ and $\| Y 
	\|_{L^{p}(\Omega; \mathbb{R}^d)} > 0$. Define the 
	function $\varphi \colon [0,1] \to \mathbb{R}$ by $\varphi(t)=\| x + ty \|^p$ 
	for $x,y \in \mathbb{R}^d$. Using Taylor expansion, it holds
	$\varphi(1) = \varphi(0) + \varphi'(0) + \int_0^1 \varphi''(t) (1-t) \, \mathrm{d} t$
	and thus
	\begin{align*}
		\| X+Y \|^p &= \| X \|^p + p \| X \|^{p-2} \sum_{i=1}^d X^i Y^i 
		+ p \int_0^1 \| X+tY \|^{p-2} \| Y \|^2 (1-t) \, \mathrm{d} t \\
	&
	\phantom{=\| X \|^p}~+ p(p-2) \int_0^1 \| X+tY \|^{p-4} \bigg( \sum_{i=1}^d
	(X^i+tY^i) Y^i \bigg)^2 (1-t) \, \mathrm{d} t
	\end{align*}
	$\Prob$-a.s. Considering the integrand of the last 
	integral in the Taylor expansion above, it holds
	\begin{align*}
		\| X+tY \|^{p-4} \bigg( \sum_{i=1}^d (X^i+tY^i) Y^i \bigg)^2
		\le \| X+tY \|^{p-2} \| Y \|^2
	\end{align*}
	$\Prob$-a.s.\ by Cauchy-Schwarz inequality. Then, it follows
	\begin{align} \label{eq:normXYtaylor1}
		\begin{split}
			\| X+Y \|^p &\le \| X \|^p + p \| X \|^{p-2} \sum_{i=1}^d X^i Y^i
			+ p(p-1)\! \int_0^1 \| X+tY \|^{p-2} \| Y \|^2 (1-t) \, \mathrm{d} t
	\end{split}
	\end{align}
	$\Prob$-a.s. Due to the assumptions, it holds
	\begin{align*}
		\Erw \big[ \Erw[ \| X \|^{p-2} X^i Y^i \vert \mathcal{G} ] \big]
		= \Erw \big[ \| X \|^{p-2} X^i \Erw [ Y^i \vert \mathcal{G} ] \big]
		= 0
	\end{align*}
	and because
	\begin{align*}
		\Erw \big[ \| X+tY \|^{p-2} \| Y \|^2 \big] 
		\le \big( \Erw [ \| X+tY \|^p ] \big)^{\frac{p-2}{p}} \big( \Erw [ \| Y \|^p ]
		\big)^{\frac{2}{p}}
	\end{align*}
	by H\"older's inequality with $\frac{p-2}{p}+\frac{2}{p}=1$,
	taking the expectation on both sides of 
	inequality~\eqref{eq:normXYtaylor1} and applying Fubini's theorem,
	we obtain
	\begin{align*}
		&\Erw [ \| X+Y \|^p ] \le \Erw [ \| X \|^p ] + p(p-1) \int_0^1 \big(
		\Erw [ \| X+tY \|^p ] \big)^{\frac{p-2}{p}} \big( \Erw [ \| Y \|^p ] 
		\big)^{\frac{2}{p}} (1-t) \, \mathrm{d} t .
	\end{align*}
	This is a multi-dimensional version of \cite[Inequality~(2.1)]{Rio2009a}. 
	Now, we use a Gronwall-type inequality, i.e., we apply 
	\cite[Lemma on p.~171]{Zak1967a} with $\alpha=\frac{2}{p}$. 
	Then, it follows
	\begin{align*}
		&\Erw [ \| X+Y \|^p ] \le \bigg( \Erw [ \| X \|^p ]^{\frac{2}{p}}
		+ \frac{2}{p}p(p-1) \int_0^1 (1-t) \, \mathrm{d} t \,
		\big( \Erw [ \| Y \|^p ] \big)^{\frac{2}{p}} \bigg)^{\frac{p}{2}},
	\end{align*}
	and, since $\int_0^1(1-t) \, \mathrm{d} t=\frac{1}{2}$, inequality~\eqref{eq:Rio} 
	holds by raising both sides of the inequality above to the power of $\frac{2}{p}$. 
	Due to \cite[Remark~2.1]{Rio2009a}, the constant $p-1$ in 
	inequality~\eqref{eq:Rio} is best possible. 
	
	We remark that the considerations after \cite[inequality~(2.1)]{Rio2009a} 
	on \cite[p.~150]{Rio2009a} prove essentially Zakai's Gronwall-type 
	inequality in \cite[Lemma on p.~171]{Zak1967a}. 
	
	Now, we consider inequality~\eqref{eq:discreteBurkholder}. Since 
	$(M_n)_{n\in\{0,1,\dots,N\}}$ is a martingale, it holds that
	$\Erw [M_{n} \vert \mathcal{G}_{n-1} ] = M_{n-1}$ $\Prob$-a.s.\ for 
	$n \in \{1,\dots,N\}$, i.e., $\Erw [d_{n} \vert \mathcal{G}_{n-1} ] = 0$ $\Prob$-a.s. 
	Thus, inequality~\eqref{eq:discreteBurkholder} 
	follows from applying inequality~\eqref{eq:Rio} to $M_n= M_{n-1}+d_{n}$ 
	by induction on $n \in \{1,\dots,N\}$, cf.\ \cite[Theorem~2.1]{Rio2009a}. 

	Finally, Doob's maximal inequality 
	implies the second inequality of this proposition. Since Doob's 
	inequality is sharp, the constant is best possible, cf.\ 
	\cite[p.~87]{Bur1988a} and \cite[Theorem~2]{DG1978a}.
\end{proof}

\begin{proof}[Proof of Proposition~\ref{thm:Zakai}]
	The first inequality is proven by Zakai \cite[Theorem~1]{Zak1967a} 
	in case of $d=m=1$. We extend this proof to the case of a
	general $m$-dimensional Wiener processes and $\mathbb{R}^d$-valued integrands. 
	Due to Burkholder's inequality~\eqref{eq:cBurkholder1} and 
	the assumption on processes $f^{j}$, $j \in \{1,\dots,m\}$, 
	we have 
	$\big\| \sum_{j=1}^m \int_{0}^t f^{j}_u \, \mathrm{d} W^{j}_u
	\big\|_{L^{p}(\Omega; \mathbb{R}^d)} <\infty$.
	Let $\delta >0$ and $\varphi \in C^2(\mathbb{R}^d; \mathbb{R})$ 
	with $\varphi(x) = (\delta + \| x \|^2)^{\frac{p}{2}}$ for $x \in \mathbb{R}^d$.
	Applying It{\^o}'s formula for the function $\varphi$ to the It{\^o} process
	$(X_t)_{t \in [0,T]}$ with $X_t = \smash{ \sum_{j=1}^m \int_0^t f_u^j \, 
	\mathrm{d} W_u^j}$,
	then \cite[Equation~(6)]{Zak1967a} reads in the multi-dimensional case as
	\begin{align*}
		&\bigg( \delta+ \bigg\| \sum_{j=1}^m \int_{0}^t f^{j}_u \, \mathrm{d} W^{j}_u
		\bigg\|^2 \bigg)^{\frac{p}{2}} - \delta^{\frac{p}{2}} \\
		&\quad= \frac{p}{2} \int_{0}^t \bigg( \delta + \bigg\|
		\sum_{j=1}^m \int_{0}^s f^{j}_u \, \mathrm{d} W^{j}_u \bigg\|^2
		\bigg)^{\frac{p}{2}-1} \sum_{j=1}^m \sum_{i=1}^d (f^{i,j}_s)^2
		\, \mathrm{d} s \\
		&\qquad+ \frac{p(p-2)}{2} \int_{0}^t \bigg( \delta + \bigg\|
		\sum_{j=1}^m \int_{0}^s f^{j}_u \, \mathrm{d} W^{j}_u \bigg\|^2
		\bigg)^{\frac{p}{2}-2} \\
		&\qquad \qquad \times
		\sum_{l=1}^m \sum_{i,k=1}^d \bigg(\sum_{j=1}^m \int_{0}^s f^{i,j}_u \, 
		\mathrm{d} W^{j}_u \bigg) \bigg( \sum_{j=1}^m \int_{0}^s f^{k,j}_u \, 
		\mathrm{d} W^{j}_u \bigg) f^{i,l}_s f^{k,l}_s \, \mathrm{d} s \\
		&\qquad + p \sum_{l=1}^m \int_{0}^t \! \bigg( \delta + \bigg\|
		\sum_{j=1}^m \int_{0}^s f^{j}_u \, \mathrm{d} W^{j}_u \bigg\|^2
		\bigg)^{\!\frac{p}{2}-1} \sum_{i=1}^d \! \bigg( \sum_{j=1}^m \int_{0}^s \!\!
		f^{i,j}_u \, \mathrm{d} W^{j}_u \bigg) f^{i,l}_s \, \mathrm{d} W^l_s
	\end{align*}
	$\Prob$-a.s.
	Taking the expectation and using the Cauchy-Schwarz inequality, we obtain
	\begin{align*}
	&\Erw \bigg[ \bigg( \delta + \bigg\| \sum_{j=1}^m \int_{0}^t f^{j}_u \, 
	\mathrm{d} W^{j}_u \bigg\|^2 \bigg)^{\frac{p}{2}} \bigg] - \delta^{\frac{p}{2}} \\
	&\quad = \frac{p}{2} \int_{0}^t \Erw \bigg[ \bigg( \delta + \bigg\|
	\sum_{j=1}^m \int_{0}^s f^{j}_u \, \mathrm{d} W^{j}_u \bigg\|^2
	\bigg)^{\frac{p}{2}-1} \sum_{j=1}^m \| f^{j}_s \|^2 \bigg] \, \mathrm{d} s \\
	&\qquad + \frac{p(p-2)}{2} \int_{0}^t \Erw \bigg[ \bigg( \delta +
	\bigg\| \sum_{j=1}^m \int_{0}^s f^{j}_u \, \mathrm{d} W^{j}_u \bigg\|^2
	\bigg)^{\frac{p}{2}-2} \sum_{l=1}^m \sum_{i=1}^d \bigg(
	\sum_{j=1}^m \int_{0}^s f^{i,j}_u \, \mathrm{d} W^{j}_u \,f^{i,l}_s \bigg)^2
	\bigg] \, \mathrm{d} s \\
	&\quad \le \frac{p(p-1)}{2} \int_{0}^t \Erw \bigg[ \bigg( \delta + \bigg\|
	\sum_{j=1}^m \int_{0}^s f^{j}_u \, \mathrm{d} W^{j}_u \bigg\|^2
	\bigg)^{\frac{p}{2}-1} \sum_{j=1}^m \| f^{j}_s \|^2 \bigg] \, \mathrm{d} s ,
	\end{align*}
	which corresponds to the multi-dimensional variant of 
	\cite[inequality~(8)]{Zak1967a}. Then, inequality~\eqref{eq:Zakai} 
	follows from the same arguments as in \cite[pp.~171--172]{Zak1967a} 
	by applying a Gronwall-type inequality \cite[Lemma on p.~171]{Zak1967a} 
	and letting $\delta \to 0$. 
	
	Applying Doob's submartingale inequality
	finally yields the second inequality of the proposition.
\end{proof}
%
%
%
%
\bibliographystyle{plainurl}
\bibliography{Bibfile}

\begin{thebibliography}{1}

\bibitem{Bur1988a}
Donald~L. Burkholder.
\newblock Sharp inequalities for martingales and stochastic integrals.
\newblock {\em Ast\'{e}risque}, (157-158):75--94, 1988.
\newblock URL: \url{http://www.numdam.org/item/AST_1988__157-158__75_0/}.

\bibitem{Dol1969a}
Catherine Dol\'{e}ans.
\newblock Variation quadratique des martingales continues \`a droite.
\newblock {\em Ann. Math. Statist}, 40:284--289, 1969.
\newblock \href {https://doi.org/10.1214/aoms/1177697823}
  {\path{doi:10.1214/aoms/1177697823}}.

\bibitem{DG1978a}
Lester~E. Dubins and David Gilat.
\newblock On the distribution of maxima of martingales.
\newblock {\em Proc. Amer. Math. Soc.}, 68(3):337--338, 1978.
\newblock \href {https://doi.org/10.2307/2043117} {\path{doi:10.2307/2043117}}.

\bibitem{MR1331198}
Iosif Pinelis.
\newblock Optimum bounds for the distributions of martingales in {B}anach
  spaces.
\newblock {\em Ann. Probab.}, 22(4):1679--1706, 1994.
\newblock URL: \url{http://www.jstor.org/stable/2244912}.

\bibitem{Rio2009a}
Emmanuel Rio.
\newblock Moment inequalities for sums of dependent random variables under
  projective conditions.
\newblock {\em J. Theoret. Probab.}, 22(1):146--163, 2009.
\newblock \href {https://doi.org/10.1007/s10959-008-0155-9}
  {\path{doi:10.1007/s10959-008-0155-9}}.

\bibitem{Zak1967a}
Moshe Zakai.
\newblock Some moment inequalities for stochastic integrals and for solutions
  of stochastic differential equations.
\newblock {\em Israel J. Math.}, 5(3):170--176, 1967.
\newblock \href {https://doi.org/10.1007/BF02771103}
  {\path{doi:10.1007/BF02771103}}.

\end{thebibliography}
\end{document}